\renewcommand{\Bbb}{\mathbb}  
\newcommand{\Q}{{\Bbb{Q}}}  
\newcommand{\R}{{\Bbb{R}}}  
\newcommand{\C}{{\Bbb{C}}}  
\newcommand{\Z}{{\Bbb{Z}}}  
\newcommand{\Oh}{\mathcal{O}}
\newcommand{\op}[1]{\operatorname{#1}}
\newcommand{\gl}{\mathfrak{gl}}
\newcommand{\U}{\mathrm{U}}
\newcommand{\Un}{\U(n)}
\newcommand{\sing}{\mathrm{sing}}
\theoremstyle{plain}
    \newtheorem{theorem}{Theorem}[section]
    \newtheorem{prop}[theorem]{Proposition}
    \newtheorem{lemma}[theorem]{Lemma}
     \newtheorem{cor}[theorem]{Corollary}
\theoremstyle{definition}
    \newtheorem{defn}[theorem]{Definition}
    \newtheorem{rem}[theorem]{Remark}
\def\Alphabet{A,B,C,D,E,F,G,H,I,J,K,L,M,N,O,P,Q,R,S,T,U,V,W,X,Y,Z}
\def\alphabet{a,b,c,d,e,f,g,h,i,j,k,l,m,n,o,p,q,r,s,t,u,v,w,x,y,z}
\def\endpiece{xxx}
\def\makeAlphabet[#1]{\expandafter\makeA#1,xxx,}
\def\makealphabet[#1]{\expandafter\makea#1,xxx,}
\def\makeA#1,{\def\temp{#1}\ifx\temp\endpiece\else%
\mkbb{#1}\mkfrak{#1}\mkbf{#1}\mkcal{#1}\expandafter\makeA\fi}%
\def\makea#1,{\def\temp{#1}\ifx\temp\endpiece\else\mkfrak{#1}\mkbf{#1}\expandafter\makea\fi}%
\def\mkbb#1{\expandafter\def\csname bb#1\endcsname{\mathbb{#1}}}
\def\mkfrak#1{\expandafter\def\csname fr#1\endcsname{\mathfrak{#1}}}
\def\mkbf#1{\expandafter\def\csname b#1\endcsname{\mathbf{#1}}}
\def\mkcal#1{\expandafter\def\csname c#1\endcsname{\mathcal{#1}}}
\def\makeop[#1]{\xmakeop#1,xxx,}
\def\mkop#1{\expandafter\def\csname #1\endcsname{{\mathrm{#1}}}} %
\def\xmakeop#1,{\def\temp{#1}\ifx\temp\endpiece\else\mkop{#1}\expandafter\xmakeop\fi}%
\def\isom{\cong}
\begin{document}
\title[Comparing volume forms]{Comparing natural volume forms on $\Gl_n$}   
\author{Annette Huber and Wolfgang Soergel}
\date{\today} 
\maketitle

\begin{abstract}
There are two natural choices for a volume form on the algebraic  group 
$\Gl_n/\Q$: the first is the integral form (unique up to sign), the other
is the product of the primitive classes in algebraic de Rham cohomology. 
We work out the explicit comparision factor between the two.
\end{abstract}

\section{Introduction}
Consider the group $\Gl_n$ and its Lie algebra $\gl_n$. Both are defined
over $\Z$. The isomorphism
\[ H^i(\gl_n,\Q)\to H^i_\dR(\Gl_n,\Q)\]
(\cite{Ho} Lemma 4.1)
can be used to define an integral structure in algebraic de Rham cohomology
as the image of integral Lie algebra cohomology.
\begin{defn}Let
\[ \rho_\Z^\dR\in H^{n^2}_\dR(\Gl_n,\Q)\]
be the image of a generator of
\[ H^{n^2}(\gl_n,\Z)=\bigwedge^{n^2}\gl_n^*\]
where $\gl_n^*$ is the $\Z$-dual of the integral Lie algebra $\gl_n$.
\end{defn}
Note that $\rho_\Z^\dR$ is only well-defined up to sign. 

Let $p_i^\dR\in H^{2i-1}_\dR(\Gl_n,\Q)$ be the primitive element
normalized as suspension of the universal Chern class $c_i^\dR\in H^{2i}(B\Gl_n,\Q)$. 
\begin{defn}We call
\[ \omega^\dR=p_1^\dR\wedge\dots\wedge p_n^\dR\in H^{n^2}_\dR(\Gl_n,\Q)\]
the {\em Borel element}.
\end{defn}
The Borel element occurs in his definition of a regulator on
higher algebraic $K$-theory of number fields. In \cite{borel} 
he relates it to special values of Dedekind $\zeta$-functions
of number fields, at least up to a rational factor.

The purpose of this note is to verify the following comparison result:
\begin{prop}\label{mainresult}
\[ \omega^\dR=\pm \left(\prod_{j=1}^{n}(j-1)!\right)\rho_\Z^\dR\]
\end{prop}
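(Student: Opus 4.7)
The plan is to translate the statement, via the isomorphism $H^*(\gl_n,\Q)\cong H^*_\dR(\Gl_n,\Q)$ of the introduction, into the Chevalley-Eilenberg complex $\bigwedge^*\gl_n^*$. Under this translation $\rho_\Z^\dR$ becomes (up to sign) the wedge of the matrix-unit dual basis $\bigwedge_{i,j}E_{ij}^*$, and $\omega^\dR$ becomes a specific rational multiple of it. Since $\bigwedge^{n^2}\gl_n^*$ has rank one, the whole proposition reduces to computing a single scalar.

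First, I would represent each primitive $p_i^\dR$ by an explicit Lie algebra cocycle. Since $p_i^\dR$ is the suspension of $c_i^\dR$ and since Chern-Weil identifies $c_i$ with the $i$-th elementary symmetric polynomial on $\gl_n$, Newton's identity gives $\tr(A^i)\equiv(-1)^{i-1}i\cdot c_i$ modulo decomposables. Because transgression annihilates decomposables, this yields $(-1)^{i-1}i\cdot p_i^\dR = \op{transg}(\tr(A^i))$. Combining with the standard Cartan transgression formula for a symmetric invariant polynomial of degree $i$ produces
\[ p_i^\dR = a_i\cdot\tau_i,\qquad \tau_i(X_1,\dots,X_{2i-1}):=\sum_{\sigma\in S_{2i-1}}\sgn(\sigma)\,\tr(X_{\sigma(1)}\cdots X_{\sigma(2i-1)}),\]
for an explicit rational constant $a_i$ (of the shape $\pm[(i-1)!]^2/(2i-1)!$, though only the cumulative product matters).

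The proposition then reduces to evaluating $\tau_1\wedge\tau_2\wedge\cdots\wedge\tau_n$ on the ordered matrix-unit basis $(E_{11},E_{12},\dots,E_{nn})$. Expanding by multilinearity and using the identity $\tr(E_{i_1j_1}\cdots E_{i_\ell j_\ell})=\delta_{j_1i_2}\delta_{j_2i_3}\cdots\delta_{j_\ell i_1}$, this evaluation collapses into a signed enumeration of ordered partitions of the set $\{(i,j)\mid 1\leq i,j\leq n\}$ into disjoint directed cycles of the prescribed odd lengths $1,3,5,\dots,2n-1$ (exactly one cycle of each length).

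The main obstacle is combining the constants $a_i$ with this cycle enumeration to produce exactly $\pm\prod_j(j-1)!$, with correct sign bookkeeping that matches shuffle signs against cycle signs. A promising shortcut is induction on $n$ via the Levi embedding $\gl_{n-1}\subset\gl_n$: the top primitive $p_n^\dR$, being the only factor of degree $2n-1$, must essentially wrap around the new row and column of matrix units, and one expects its contribution in the recursion to produce precisely the new factor $(n-1)!$. A sanity check at $n=1$ (where both sides equal $dT/T$ on $\G_m$) and $n=2$ (where the product equals $1$) supports this bookkeeping.
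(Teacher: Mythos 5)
Your route is genuinely different from the paper's. The paper passes to singular cohomology, uses that $p_1^\sing\wedge\dots\wedge p_n^\sing$ is an integral generator dual to the fundamental class of $\Un$, and then computes $\int_{\Un}\det^{-n}\bigwedge dz_{ij}$ by induction along the fibration $\U(n+1)\to S^{2n+1}$, the factorials arising from the volumes of odd-dimensional spheres. You propose instead to stay inside the Chevalley--Eilenberg complex, represent each $p_i^\dR$ by the classical cocycle $\tau_i$, and reduce everything to one scalar. If completed this would be an attractive, purely algebraic argument with no transcendental integration; but as written it is a plan rather than a proof, and the two steps you defer are exactly the hard ones.

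First, the constants $a_i$ are not derived, and the normalization is where the danger lies: in the paper $p_i^\dR$ is the suspension of the Chern class of the simplicial scheme $B\Gl_n$ in algebraic de Rham cohomology, whereas the transgression constant you quote ($\pm[(i-1)!]^2/(2i-1)!$) is the Cartan/Chern--Weil one for the compact group, whose conventions carry hidden powers of $2\pi i$ relative to the algebraic Chern classes (this discrepancy is precisely the content of Proposition \ref{comparison} and Corollary \ref{comparisontop}). You would need either an algebraic transgression statement matched to the paper's $s_j$, or to re-import the comparison isomorphism, at which point the transcendental input returns. Second, the combinatorial evaluation is not carried out. Note that $\tr(E_{i_1j_1}\cdots E_{i_\ell j_\ell})\neq 0$ forces a closed \emph{walk}, not a cycle --- vertices may repeat --- so you are computing a signed count of decompositions of the $n^2$ edges of the complete looped digraph on $n$ vertices into closed walks of lengths $1,3,\dots,2n-1$, an Euler-decomposition count with delicate signs. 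Your proposed induction via $\gl_{n-1}\subset\gl_n$ rests on the claim that the degree-$(2n-1)$ factor must absorb exactly the $2n-1$ new matrix units; this is not forced, and you would have to show that all other distributions cancel. Until these two points are settled, the factor $\prod_{j}(j-1)!$ is asserted rather than proved; the checks at $n=1,2$ do not discriminate, since the product equals $1$ in both cases.
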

Our strategy is to use the comparison isomorphism between
de Rham cohomology and singular cohomology, which is compatible with 
Leray spectral sequences, products and Chern classes. The structure of singular
cohomology of $\Gl_n(\C)$ with integral coefficients is well-known and
in particular the product of the primitive classes is an integral
generator of $H^{n^2}_\sing(\Gl_n(\C),\Z)$. It remains to compare 
it with $\rho_\Z^\dR$. This is done by integrating the differential
form $\rho_\Z^\dR$ over a fundamental cycle, i.e., over $\Un$.

The interest for this result comes from an ongoing joint project of the
first author and G. Kings relating the unkown rational factor in Borel's
work to the Bloch-Kato conjecture for Dedekind-$\zeta$-functions.

\noindent {\em Acknowledgements:} We would like to thank A. Glang, S. Goette, G. Kings and M. Wendt for discussions and H. Klawitter for a numerical check in low degrees.

\section{Singular cohomology}
\begin{defn}Let $E\Gl_n$ be the simplicial scheme with $E_n\Gl_n=\Gl_n^k$
with boundary maps given by projections and degeneracies by diagonals. 
It carries a natural diagonal operation of $\Gl_n$.  
The classifying space $B\Gl_n$ is the quotient of $E\Gl_n$ by this action.
\end{defn}
We view $\Gl_n(\C)$ etc. as topological spaces with the analytic topology.
Let $\Un$ be the unitary group as real Lie group.
\begin{prop}[Borel]\label{sing}Let $c_j^\sing\in H^{2j}_\sing(B\Gl_n(\C),\Q)$ be the universal $j$-th Chern
class in singular cohomology. Let 
\[ s_j:H^{2j}_\sing(B\Gl_n(\C),\Z)\to H^{2j-1}_\sing(\Gl_n(\C),\Z)\]
be the suspension map. Let $p_j^\sing=s_j(c_j^\sing)$.
Then:
\begin{enumerate}
\item
\[ H^*_\sing(B\Gl_n(\C),\Z)=\Z[c_1^\sing,c_2^\sing,\dots,c_n^\sing]\]
as graded algebras.
\item
With $P_n=\bigoplus_{j=1}^n\Z p_j^\sing$ we have
\[ H^*_\sing(\Gl_n(\C),\Z)=\bigwedge^*_\Z P_n\]
as graded Hopf-algebras.
\end{enumerate}
\end{prop}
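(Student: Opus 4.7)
The plan is to pass to the maximal compact subgroup and invoke two classical tools: the splitting principle for $H^*(B\Un,\Z)$ and Borel's transgression theorem for compact Lie groups. Since Gram--Schmidt (equivalently, the Iwasawa decomposition) gives a deformation retraction $\Gl_n(\C)\to\Un$ inducing a homotopy equivalence $B\Gl_n(\C)\simeq B\Un$, I will replace $\Gl_n(\C)$ by $\Un$ throughout.

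For part (1), I would invoke the splitting principle. Let $T\subset\Un$ be the diagonal maximal torus. The bundle $BT\to B\Un$ has fiber the complete flag variety $\Un/T$, whose integral cohomology is free and concentrated in even degrees by the Schubert cell decomposition. Leray--Hirsch then realizes $H^*(BT,\Z)=\Z[t_1,\dots,t_n]$ (with $|t_i|=2$) as a free module over $H^*(B\Un,\Z)$, so pullback is injective. A rank (or transfer) argument identifies the image with the Weyl-group invariants $\Z[t_1,\dots,t_n]^{S_n}=\Z[\sigma_1,\dots,\sigma_n]$, and the Whitney sum formula applied to the tautological splitting of the universal $T$-bundle sends $c_j^\sing$ to $\sigma_j$, giving the claimed polynomial presentation.

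For part (2), I would analyze the Serre spectral sequence of the universal principal bundle $\Un\to E\Un\to B\Un$ with contractible total space. Since $H^*(B\Un,\Z)$ is polynomial on even-degree indecomposables $c_1^\sing,\dots,c_n^\sing$, Borel's transgression theorem tells me that the transgressions $\tau(c_j^\sing)\in H^{2j-1}(\Un,\Z)$ exist, are primitive, and freely generate $H^*(\Un,\Z)$ as an exterior algebra. By definition the suspension $s_j$ is inverse to the transgression on indecomposables, so up to sign $p_j^\sing=\tau(c_j^\sing)$. The Hopf algebra structure is inherited from the $H$-space multiplication on $\Un$, and the $p_j^\sing$ being transgressions forces them to be primitive.

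The main subtlety is the integral coefficient ring: rationally both parts admit much shorter arguments (via Hopf algebra / Milnor--Moore-type reasoning), but the statement here requires Leray--Hirsch to apply over $\Z$ (for which one needs the integral Schubert basis on $\Un/T$) and the transgression theorem to collapse the spectral sequence integrally. The torsion-freeness of $H^*(\Un,\Z)$ is really the heart of the matter, and it is exactly the point where one invokes Borel's original analysis.
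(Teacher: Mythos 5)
Your proposal reaches the right statements by a genuinely different route from the paper, and the difference matters over $\Z$. The paper argues from the group upward: it cites Borel's Proposition~9.1 for $H^*_\sing(\Gl_n(\C),\Z)\cong H^*(S^1\times S^3\times\dots\times S^{2n-1},\Z)$ (proved by the elementary induction over the fibrations $\U(n-1)\to\Un\to S^{2n-1}$, which is where torsion-freeness really comes from), then cites Proposition~19.1(b) to pass to the polynomial algebra on $B\Gl_n(\C)$, and only at the end uses $H^*_\sing(B\Gl_n(\C),\Z)=H^*_\sing(BT(\C),\Z)^W$ to name the generators as Chern classes. You argue from the classifying space downward: splitting principle and Leray--Hirsch for $\Un/T\to BT\to B\Un$ to get part (1), then the Serre spectral sequence of the universal bundle to get part (2). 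Your part (1) is fine and is essentially a fleshed-out version of the paper's last step, with one caveat: the transfer argument identifies the image of $H^*(B\Un,\Z)$ in $H^*(BT,\Z)$ with the Weyl invariants only after inverting $n!$; integrally the surjectivity onto $\Z[\sigma_1,\dots,\sigma_n]$ must come from the Whitney-formula observation that the $c_j$ already restrict to the elementary symmetric polynomials, which you do mention.

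The soft spot is part (2). ``Borel's transgression theorem'' in its standard form goes from an exterior fiber on universally transgressive odd generators to a polynomial base, and its converse is a field-coefficient statement; over $\Z$ neither direction follows formally from the other. Deducing that $H^*(\Un,\Z)$ is an exterior algebra on classes suspending the $c_j$ \emph{from} the polynomial structure of $H^*(B\Un,\Z)$ requires either an integral Eilenberg--Moore argument or a degreewise induction in the Serre spectral sequence that you have not carried out. Your own closing remark --- that the torsion-freeness of $H^*(\Un,\Z)$ is ``exactly the point where one invokes Borel's original analysis'' --- concedes the issue, but Borel's original analysis establishes that torsion-freeness by computing $H^*(\Un,\Z)$ directly from the sphere fibrations, i.e.\ by the paper's route; so as written your part (2) is close to circular. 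The clean fix is to reverse the order as the paper does: compute the group first, transgress upward, and use your splitting-principle computation only to identify the polynomial generators with the Chern classes.
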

\begin{proof} 
Let $S^i$ be the $i$-sphere. 
Integral cohomology of the group is computed as
\[ H^*(S^{2n+1}\times S^{2n-1}\times\dots\times S^1,\Z)\]
in
\cite{Bo1} Proposition~9.1. This means it is an exterior 
algebra on generators $y_1,\dots,y_n$.
By loc. cit. Proposition~19.1 (b) $H^*_\sing(B\Gl_n(\C),\Z)$ is
a polynomial algebra on the same generators.

Let $T$ be the diagonal torus of $\Gl_n(\C)$ and
$W$ the Weyl group (i.e. the symmetric group).
Then we have (\cite{Hu} Ch. 18, Theorem 3.2)
\[ 
H^*_\sing(B\Gl_n(\C),\Z)= H^*_\sing(B T(\C),\Z)^W=\Z[c_1^\sing,\dots,c_n^\sing]
\]
This implies that $y_i$ can be identified with the
universal Chern class $c_i^\sing$.
\end{proof}

\begin{rem}This is the statement in the form usually used in algebraic
topology. From the point of view of complex or algebraic geometry it would be
more natural to view $c_j$ as an element of $H^{2j}_\sing(B\Gl_n(\C),(2\pi i)^j\Z)$. There is a hidden choice of $i$ or orientation on $\C$ behind the
translation from one point of view to the other.
\end{rem}

\begin{cor}\label{fund} The product
\[ \omega_\sing=p_1^\sing\wedge\dots p_n^\sing\]
is a generator of $H^{n^2}_\sing(\Gl_n(\C),\Z)$. It is the dual of
the fundamental class of 
\[ [\Un]\in H_{n^2}^\sing(\Un,\Z)\isom H_{n^2}^\sing(\Gl_n(\C),\Z)\]
\end{cor}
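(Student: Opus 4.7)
The first assertion is immediate from Proposition \ref{sing}(2): that result identifies $H^*_\sing(\Gl_n(\C),\Z)$ with the exterior $\Z$-algebra on the odd-degree classes $p_j^\sing$ of degree $2j-1$ for $j=1,\dots,n$. Since $\sum_{j=1}^n(2j-1)=n^2$, the top-degree component is free of rank one over $\Z$, generated precisely by the product $\omega_\sing=p_1^\sing\wedge\cdots\wedge p_n^\sing$.

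For the second assertion, my plan is to combine the Iwasawa (QR) decomposition with Poincar\'e duality for $\Un$. First, the inclusion $\Un\hookrightarrow\Gl_n(\C)$ is a homotopy equivalence, since QR decomposition exhibits $\Gl_n(\C)$ as the product of $\Un$ with the contractible group of upper-triangular matrices with positive real diagonal. This yields the stated isomorphism $H_{n^2}^\sing(\Un,\Z)\isom H_{n^2}^\sing(\Gl_n(\C),\Z)$. Next, $\Un$ is a compact connected orientable real Lie group of dimension $n^2$, so its top singular homology is free of rank one, generated by the fundamental class $[\Un]$. By the universal coefficient theorem, combined with the first assertion and the fact that $H^{n^2}_\sing(\Gl_n(\C),\Z)$ is torsion-free, the evaluation pairing $H^{n^2}_\sing(\Gl_n(\C),\Z)\times H_{n^2}^\sing(\Gl_n(\C),\Z)\to\Z$ is perfect. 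Consequently $\langle\omega_\sing,[\Un]\rangle=\pm 1$, so $\omega_\sing$ is the dual of $[\Un]$ up to sign.

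There is no substantial obstacle here: Proposition \ref{sing} does the heavy lifting, and the remainder is formal. The only subtlety is the residual sign, which is harmless since $\rho_\Z^\dR$ itself is only defined up to sign, and the final comparison in Proposition \ref{mainresult} is likewise stated modulo sign.
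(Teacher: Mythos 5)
Your proof is correct and follows essentially the same route as the paper: the first claim is read off from Proposition \ref{sing}(2), and the second combines the homotopy equivalence $\Un\hookrightarrow\Gl_n(\C)$ with the fact that the compact connected orientable manifold $\Un$ generates its top homology, the duality then being formal. The paper merely states these ingredients more tersely (citing Greenberg--Harper for the fundamental class) and leaves the universal-coefficient step implicit.
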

\begin{proof}The first statement is contained in the proposition. 
$\Un\subset \Gl_n(\C)$ is a homotopy equivalence. $\Un$ is compact, orientable and connected, hence $H_{n^2}^\sing(\Un,\Z)$
is generated by the manifold $\Un$ itself
(\cite{GH} Theorem 22.24).
\end{proof}

\section{De Rham cohomology}
\begin{defn}
Let $X$ be a smooth algebraic variety over $\Q$. Its {\em algebraic de
Rham cohomology} is defined as
\[ H^j_\dR(X)=H^j(X,\Omega^*_X)\]
the hypercohomology of the algebraic de Rham complex.
\end{defn}

Recall that there is a natural isomorphism of functors
\[ \sigma: H^j_\sing(X(\C),\Z)\otimes_\Z\C\to H^j_\dR(X)\otimes_\Q\C\]
It is induced by the inclusion  $\Z_X\to\C_X$  of sheaves for the analytic topology on $X(\C)$ and the quasi-isomorphism
\[ \C_X\to \Omega^{\an,*}_X\]
with the holomorphic de Rham complex (holomorphic Poincar\'e Lemma) on the 
one hand and the comparison between algebraic and holomophic de Rham cohomology on the other hand. In particular, $\sigma$ is compatible with products.

\begin{prop}Let $c_j^\dR\in H^{2j}_\dR(B\Gl_n)$ be the universal $j$-th Chern
class in algebraic de Rham cohomology. Let 
\[ s_j:H^{2j}_\dR(B\Gl_n)\to H^{2j-1}_\dR(\Gl_n)\]
be the suspension map. Let $p_j^\dR=s_j(c_j^\dR)$.
Then:
\begin{enumerate}
\item
\[ H^*_\dR(B\Gl_n)=\Q[c_1^\dR,c_2^\dR,\dots,c_n^\dR]\]
as graded algebras.
\item With $P_n=\bigoplus_{j=1}^n\Q p_j^\dR$ we have
\[ H^*_\dR(\Gl_n)=\bigwedge\Q^*P_n\]
as graded Hopf-algebras.
\end{enumerate}
\end{prop}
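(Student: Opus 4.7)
The plan is to transport Proposition~\ref{sing} via the comparison isomorphism $\sigma$, exploiting that $\sigma$ is natural in smooth $\Q$-varieties and compatible with products, Chern classes, and the spectral sequences that compute cohomology of classifying spaces and define the suspension map.

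For part (1), I would observe that the simplicial scheme $E\Gl_n$ has each level $\Gl_n^k$ smooth over $\Q$, so algebraic de Rham cohomology of $B\Gl_n$ is computed by the spectral sequence with $E_1^{p,q}=H^q_\dR(\Gl_n^p)$, and singular cohomology of $B\Gl_n(\C)$ by the analogous spectral sequence. Applying $\sigma$ levelwise and using its compatibility with pullbacks and products yields an isomorphism of spectral sequences after base change of the singular side to $\C$, hence an isomorphism
\[ \sigma_B:\ H^*_\sing(B\Gl_n(\C),\Q)\otimes_\Q\C \isom H^*_\dR(B\Gl_n)\otimes_\Q\C \]
of graded $\C$-algebras, under which $c_j^\sing$ is sent to a nonzero scalar (the $(2\pi i)^j$-twist of the remark) times $c_j^\dR$. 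By Proposition~\ref{sing}(1) the left-hand side is a polynomial algebra on $c_1^\sing,\dots,c_n^\sing$, so the right-hand side is a polynomial algebra on $c_1^\dR,\dots,c_n^\dR$ over $\C$. Since the $c_j^\dR$ are rational classes and $\C/\Q$ is faithfully flat, they generate $H^*_\dR(B\Gl_n)$ as a polynomial $\Q$-algebra with no relations.

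For part (2) I would run the parallel argument on $\Gl_n$ itself, again reducing to Proposition~\ref{sing}(2) over $\C$ via $\sigma$. The key additional input is that the suspension map $s_j$ on either side is the transgression in the Leray/simplicial spectral sequence for $\Gl_n \to E\Gl_n \to B\Gl_n$; by naturality this transgression commutes with $\sigma$, so $\sigma$ identifies $p_j^\sing$ with a nonzero scalar multiple of $p_j^\dR$. This transfers the exterior algebra structure to $H^*_\dR(\Gl_n)\otimes\C$ on generators $p_1^\dR,\dots,p_n^\dR$, and faithful flatness gives the stated structure over $\Q$. The Hopf structure, whose coproduct is induced by the group law $\Gl_n\times\Gl_n\to\Gl_n$, is preserved by $\sigma$ by functoriality, so the Hopf isomorphism descends as well. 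The main obstacle I anticipate is precisely the compatibility package: one must verify that $\sigma$ extends termwise to the simplicial smooth scheme $E\Gl_n$ and commutes with the transgression differentials; once this naturality is in place, the whole proposition is a formal consequence of Borel's singular computation.
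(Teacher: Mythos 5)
Your proposal is correct and follows essentially the same route as the paper, which simply cites Proposition~\ref{sing} together with the existence and compatibility properties of the comparison isomorphism (the paper's introduction already records that $\sigma$ is compatible with Leray spectral sequences, products and Chern classes). Your write-up just makes explicit the simplicial spectral sequence, the transgression compatibility, and the faithfully flat descent from $\C$ to $\Q$ that the paper leaves implicit.
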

\begin{proof}There are different arguments for this fact. Once algebraicity of the Chern classes is known, the result follows directly from Proposition \ref{sing} and the existence of the comparison isomorphism. 
\end{proof}

\begin{prop}\label{comparison}
The comparison isomorphism $\sigma$ is compatible with Chern classes. More
precisely,
\[ \sigma((2\pi i)^jc_j^\sing)=c_j^\dR\]
\end{prop}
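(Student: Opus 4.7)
The plan is to reduce the statement to the case of the first Chern class of a line bundle via the splitting principle, and then to verify the identity by a direct \v{C}ech computation based on the exponential sequence.

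First I would reduce to line bundles. Let $T \subset \Gl_n$ be the diagonal torus and $\iota : BT \to B\Gl_n$ the induced map of classifying simplicial schemes. Exactly as in the proof of Proposition~\ref{sing}, pullback along $\iota$ identifies both $H^*_\sing(B\Gl_n(\C),\Q)$ and $H^*_\dR(B\Gl_n)$ with the Weyl-group invariants in the corresponding cohomology of $BT = (B\G_m)^n$, and the comparison $\sigma$ intertwines the two identifications. Since Chern classes in both theories satisfy the Whitney sum formula, the universal class $c_j$ pulls back, in either theory, to the $j$-th elementary symmetric polynomial in the first Chern classes of the tautological line bundles $L_1,\dots,L_n$ on the factors. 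Consequently it suffices to prove the identity $\sigma(2\pi i \cdot c_1^\sing(L)) = c_1^\dR(L)$ for a single line bundle $L$, and by naturality even just for $\Oh(1)$ on some $\Pe^N$.

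Next I would carry out the comparison directly via \v{C}ech cocycles. Given a trivializing cover $\mathcal{U} = (U_i)$ of $L$ with transition functions $g_{ij} \in \Oh_X^*(U_{ij})$, the class $c_1^\sing(L)$ is, up to sign, the image of $[g_{ij}]$ under the connecting map of the exponential sequence $0 \to \Z \xrightarrow{\cdot 2\pi i} \Oh^\an \xrightarrow{\exp} \Oh^{\an,*} \to 1$; locally choosing branches of $\log g_{ij}$, it is represented by the integral $2$-cocycle $n_{ijk} = (2\pi i)^{-1}(\log g_{jk} - \log g_{ik} + \log g_{ij})$. On the other hand, via the analytic \v{C}ech--de Rham double complex $\check{C}^p(\mathcal{U},\Omega^q_\an)$, the class $c_1^\dR(L)$ is represented by the $\Omega^1$-valued $1$-cocycle $d\log g_{ij} = dg_{ij}/g_{ij}$. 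The cochain $\log g_{ij} \in \check{C}^1(\mathcal{U}, \Oh^\an)$ has \v{C}ech coboundary $2\pi i \cdot n_{ijk}$ and de Rham differential $d\log g_{ij}$, so in the total complex of $\C \to \Omega^{\an,*}$ (which computes $H^*(X(\C),\C) \isom H^*_\dR(X) \otimes_\Q \C$) it witnesses the equality $[d\log g_{ij}] = 2\pi i \cdot [n_{ijk}]$, up to sign. Tracing through the definition of $\sigma$ yields the claim for $j=1$.

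The main obstacle will be the bookkeeping of sign and orientation conventions: the sign in the connecting homomorphism, the sign rule in the total \v{C}ech--de Rham differential, and the sign convention for $c_1$ of $\Oh(1)$ all enter, but they can be pinned down once and for all by computing both sides on the single explicit example of $\Oh(1)$ on $\Pe^1$. Once the case $j=1$ is settled, the splitting principle promotes the factor $2\pi i$ to $(2\pi i)^j$ in $c_j$ by multiplicativity, completing the proof.
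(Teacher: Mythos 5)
Your proposal is correct and follows essentially the same route as the paper: reduce to $j=1$ via the splitting principle over the diagonal torus, then trace the factor $2\pi i$ to the discrepancy between the embedding $\Z\xrightarrow{2\pi i}\Oh^\an$ in the exponential sequence (defining $c_1^\sing$) and the map $f\mapsto df/f$ (defining $c_1^\dR$), against the natural inclusion $\Z\subset\C\subset\Oh^\an$ used in $\sigma$. The only cosmetic difference is that you phrase the $j=1$ comparison via \v{C}ech cocycles for transition functions of a concrete line bundle, whereas the paper applies the connecting homomorphism and the $d\log$ map directly to the universal invertible function $z$ on $\Gl_1(\C)=\C^*$.
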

\begin{proof} Recall that the $j$-th Chern class  is the 
$j$-th elementary symmetric polynomial in the $1$-st Chern class of
diagonal torus (splitting principle). Hence it suffices to consider
the case $j=1$. 

For singular cohomology (or rather cohomology of sheaves on $\Gl_1(\C)=\C^*$)
consider the exact sequence of sheaves
\[ 0\to \Z\xrightarrow{2\pi i} \Oh^\an\xrightarrow{\exp} \Oh^{\an *}\to 1\]
$c_1^\sing$ is the image of the invertible function $z$ (the coordinate function of $\C^*$) under the connecting homomorphism.
For algebraic or holomorphic de Rham cohomology consider the morphism of complexes
\[ \Oh^*[-1]\to \Omega^*\hspace{3ex}f\mapsto \frac{df}{f}\]
$c_1^\dR=\frac{dz}{z}$ is the image of the invertible function $z$ under this
morphism of complexes. The two constructions are nearly (but not quite) compatible with
the definition of the comparison functor $\sigma$ which asks for $\Z$ to
be naturally embedded into the constant functions $\C\subset\Oh^\an$. 
This gives the factor $2\pi i$ as claimed.
\end{proof}

\begin{cor}\label{comparisontop}
Let as before $\omega^\dR=p_1^\dR\wedge\dots\wedge p_n^\dR\in H^{n^2}_\dR(\Gl_n)$. Then
\[ \sigma ((2\pi i)^{\frac{n(n+1)}{2}}\omega^\sing)=\omega^\dR\]
\end{cor}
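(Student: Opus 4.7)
The plan is to push the identity $\sigma((2\pi i)^j c_j^\sing)=c_j^\dR$ of Proposition~\ref{comparison} through the suspension map and then take the exterior product, collecting all the factors of $2\pi i$ at the end.

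First I would verify that the suspension maps $s_j$ in singular and algebraic de Rham cohomology are intertwined by $\sigma$. Both suspensions arise from the simplicial fibre sequence $\Gl_n\to E\Gl_n\to B\Gl_n$ (with $E\Gl_n$ contractible) via the associated Leray/edge morphism of the spectral sequence. Since the comparison isomorphism $\sigma$ is a natural transformation of cohomology theories and is compatible with the Leray spectral sequence of a morphism of simplicial schemes (this is a standard property of the algebraic-to-holomorphic-to-singular comparison), one obtains a commutative square
\[
\begin{CD}
H^{2j}_\sing(B\Gl_n(\C),\Z)\otimes\C @>{\sigma}>> H^{2j}_\dR(B\Gl_n)\otimes\C \\
@V{s_j}VV @VV{s_j}V \\
H^{2j-1}_\sing(\Gl_n(\C),\Z)\otimes\C @>>{\sigma}> H^{2j-1}_\dR(\Gl_n)\otimes\C
\end{CD}
\]
Combined with Proposition~\ref{comparison}, this yields $\sigma((2\pi i)^j p_j^\sing)=p_j^\dR$ for each $j$.

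Second, I would invoke multiplicativity of $\sigma$, which is already noted in the excerpt as part of the definition of the comparison isomorphism. Applying $\sigma$ to the product $\bigl((2\pi i)^1 p_1^\sing\bigr)\wedge\dots\wedge\bigl((2\pi i)^n p_n^\sing\bigr)$ produces $p_1^\dR\wedge\dots\wedge p_n^\dR=\omega^\dR$ on the right-hand side, while on the left-hand side the scalar factors combine to $(2\pi i)^{1+2+\dots+n}=(2\pi i)^{n(n+1)/2}$ in front of $\omega^\sing$.

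There is really no hard step here: the only mildly non-formal input is the compatibility of $\sigma$ with the suspension, and this is a consequence of the functoriality of the comparison isomorphism for morphisms of (simplicial) smooth schemes over $\Q$. The arithmetic $\sum_{j=1}^n j=n(n+1)/2$ supplies exactly the exponent in the statement, which finishes the proof.
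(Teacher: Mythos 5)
Your argument is correct and follows essentially the same route as the paper: combine Proposition~\ref{comparison} with the compatibility of $\sigma$ with the suspension map to get $\sigma((2\pi i)^j p_j^\sing)=p_j^\dR$, then use multiplicativity of $\sigma$ and sum the exponents. The paper simply asserts the compatibility with suspension where you sketch a justification via the Leray spectral sequence of $\Gl_n\to E\Gl_n\to B\Gl_n$; otherwise the two proofs coincide.
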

\begin{proof}By Proposition \ref{comparison} and compatibility of
$\sigma$ with the suspension map we have
\[ \sigma ((2\pi i)^jp_j^\sing)=p_j^\dR\]
Moreover, $\sigma$ is compatible with products.
\end{proof}

\begin{lemma}\label{rho}
For $i,j=1,\dots,n$ let $z_{ij}$ be the natural coordinate on $n\times n$-matrices.
Recall that $\rho_\Z^\dR$ is the integral generator of 
$H^{n^2}(\gl_n,\Z)\subset H^{n^2}_\dR(\Gl_n)$. Then
\[ \rho_\Z=\frac{1}{\det^n}\bigwedge_{i,j=1}^n dz_{ij}\]
\end{lemma}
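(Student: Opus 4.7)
The plan is to unpack the Hochschild--Mostow isomorphism $H^*(\gl_n,\Q)\to H^*_\dR(\Gl_n,\Q)$ cited from \cite{Ho} concretely and then to perform a short determinant calculation. This isomorphism is induced at the level of complexes by the inclusion of left-invariant differential forms into the algebraic de Rham complex: to $\xi\in\gl_n^*$ it assigns the unique left-invariant $1$-form $\omega_\xi$ on $\Gl_n$ with $\omega_\xi|_e=\xi$. Writing $g=(z_{ij})$ for the matrix of coordinate functions, the Maurer--Cartan form $\theta:=g^{-1}\,dg$ has entries
\[ \theta_{ij}=\sum_{k=1}^n(g^{-1})_{ik}\,dz_{kj}. \]
An immediate check on the tangent vectors $g\cdot E_{kl}$ shows that $\theta_{ij}$ is precisely the left-invariant $1$-form dual to the standard basis element $E_{ij}$ of the integral Lie algebra $\gl_n$.

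Let $E_{ij}^*\in\gl_n^*$ denote the dual basis. A generator of $\bigwedge^{n^2}\gl_n^*$ is $\bigwedge_{i,j}E_{ij}^*$ for any fixed ordering of the index pairs $(i,j)$; another ordering would only alter a sign, which is absorbed into the sign ambiguity of $\rho_\Z^\dR$. Its image under the Hochschild--Mostow map is therefore
\[ \rho_\Z^\dR=\pm\bigwedge_{i,j=1}^n\theta_{ij}. \]

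To evaluate this wedge in the algebraic coordinates $z_{ij}$, I order the index pairs lexicographically by column, so that
\[ \bigwedge_{i,j}\theta_{ij}=\bigwedge_{j=1}^n\bigl(\theta_{1j}\wedge\theta_{2j}\wedge\dots\wedge\theta_{nj}\bigr). \]
For each fixed $j$ the column vector $(\theta_{ij})_{i=1}^n$ is obtained from $(dz_{ij})_{i=1}^n$ by left multiplication by the matrix $g^{-1}$, so the standard change-of-basis formula for top exterior powers yields $\theta_{1j}\wedge\dots\wedge\theta_{nj}=\det(g^{-1})\,dz_{1j}\wedge\dots\wedge dz_{nj}$. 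Multiplying over the $n$ columns produces the factor $\det(g^{-1})^n=1/\det^n$, giving the claimed identity up to sign.

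The computation itself is essentially routine; the only genuine point of care is the concrete identification of the Hochschild--Mostow isomorphism with evaluation at the identity of left-invariant forms, but this is built into the proof of \cite{Ho} Lemma 4.1. Signs arising from ordering conventions on $\bigwedge^{n^2}\gl_n^*$ and on $\bigwedge_{i,j}dz_{ij}$ are absorbed into the sign ambiguity inherent to $\rho_\Z^\dR$.
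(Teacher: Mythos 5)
Your proof is correct and rests on the same key input as the paper's, namely the identification of Lie algebra cochains with left-invariant differential forms from \cite{Ho}. The only difference is one of direction: you construct the left-invariant extension of the integral generator explicitly via the Maurer--Cartan form $g^{-1}\,dg$ and evaluate the wedge by a block-determinant computation, whereas the paper argues in reverse, observing that the stated form is $\Gl_n$-invariant and that its restriction to the tangent space at the identity is the standard integral generator.
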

\begin{proof} 
The complex $\Omega^*(\Gl_{n,\Q})$ is quasi-isomorphic to 
$\bigwedge^*\gl_{n,\Q}^*$ where elements in $\gl_{n,\Q}^*$ are viewed
as left-invariant differential forms (see \cite{Ho} Lemma 4.1).
The differential form in the statement is clearly $\Gl_n$-invariant. In 
order to check that it is an integral basis, it suffices to restrict to the
tangent space of $1\in\Gl_n$. There it is the standard generator.
\end{proof}

\begin{rem}All computations are up to sign, hence we do not have to specify
a prefered ordering of the coordinates.
\end{rem}

\section{A volume computation}
\begin{prop}\label{volumecomp}
Let $z_{ij}$ be the standard holomorphic coordinates on $\Gl_n(\C)$. Then
\[ \int_{\Un}\frac{1}{\det^n}\bigwedge^n_{ij=1} dz_{ij}
=\pm \prod_{\nu=0}^{n-1}\frac{(2\pi\op {i})^{\nu+1}}{\nu!}
\]
\end{prop}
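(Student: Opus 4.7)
The plan is to set up an induction on $n$ via the principal $\U(n-1)$-bundle $\pi\colon \Un \to S^{2n-1}$, $A \mapsto A e_n$. The base case $n=1$ is the direct calculation $V_1 := \int_{\U(1)} dz/z = 2\pi i$, which matches the right-hand side. Setting $\omega_n := \det^{-n}\bigwedge_{i,j=1}^n dz_{ij}$ and $V_n := \int_{\Un}\omega_n$, the induction step reduces to proving $V_n = V_{n-1}\cdot (2\pi i)^n/(n-1)!$ up to sign.

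By Lemma~\ref{rho}, $\omega_n$ is left $\Gl_n(\C)$-invariant, hence left $\Un$-invariant. Let $H \subset \Un$ be the stabilizer of $e_n$, identified with $\U(n-1)$ embedded in the upper-left $(n-1)\times(n-1)$ block. Split $\u(n) = L \oplus \u(n-1)$, where $\u(n-1) = \Lie(H)$ and $L$ is the orthogonal complement (skew-Hermitian matrices whose upper-left $(n-1)\times(n-1)$ block vanishes). At the identity, $\omega_n|_I$ factors as $\omega_L|_L \wedge \omega_V|_{\u(n-1)}$ (up to sign), with
\[
\omega_L := \bigwedge_{i=n\text{ or }j=n} dz_{ij}
\quad\text{and}\quad
\omega_V := \bigwedge_{i,j<n} dz_{ij}.
\]
By left $\Un$-invariance, this extends to a global factorization $\omega_n = \pi^*\alpha_n \wedge \beta_n$, where $\alpha_n$ is the $\Un$-invariant top form on $S^{2n-1}$ corresponding to $\omega_L|_L$ via $d\pi_I\colon L \xrightarrow{\sim} T_{e_n} S^{2n-1}$, and $\beta_n$ is a vertical form whose restriction to each fiber (identified with $\U(n-1)$ by left translation) is $\omega_{n-1}$. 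Fubini then yields $V_n = V_{n-1}\cdot \int_{S^{2n-1}}\alpha_n$.

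To compute the spherical integral, I parameterize $L$ by $X_{nn} = it$ and $X_{in} = a_i + i b_i$, $X_{ni} = -a_i + ib_i$ for $i<n$, with $t, a_i, b_i \in \R$. A direct expansion using $(dz_{in}\wedge dz_{ni})|_L = 2i\, da_i \wedge db_i$ gives
\[
\omega_L|_L \;=\; (i\,dt)\wedge\bigwedge_{i<n}(2i\, da_i\wedge db_i) \;=\; \pm\, 2^{n-1} i^n\, dt \wedge \bigwedge_{i<n} da_i \wedge db_i.
\]
Under the identification $L \cong T_{e_n} S^{2n-1}$ via $X\mapsto Xe_n$, this exhibits $\alpha_n$ as $\pm\, 2^{n-1} i^n$ times the standard Euclidean volume form on $S^{2n-1}$. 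Combined with the classical value $\mathrm{vol}(S^{2n-1}) = 2\pi^n/(n-1)!$, this yields $\int_{S^{2n-1}}\alpha_n = \pm (2\pi i)^n/(n-1)!$, closing the induction.

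The main technical obstacle is justifying the global Fubini decomposition: one must verify that $\beta_n$, pinned down at the identity so that its restriction to $\u(n-1)$ agrees with $\omega_{n-1}|_{I_{n-1}}$, extends left-invariantly to a well-defined vertical form whose fiber restrictions all integrate to $V_{n-1}$. This follows by combining the left $\Un$-invariance of $\omega_n$ with the left $\U(n-1)$-invariance of $\omega_{n-1}$: left translation by elements of $H$ within the fiber over $e_n$ preserves both forms and identifies them there, and left $\Un$-invariance then transports the agreement to every other fiber. The remainder is bookkeeping of powers of $2$ and $i$ in the explicit evaluation at the identity.
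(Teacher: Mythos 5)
Your proof is correct and follows essentially the same route as the paper: induction via the fibration $\Un\to S^{2n-1}$ with fiber $\U(n-1)$, fiber integration (Fubini) using left-invariance, and an explicit evaluation at the identity on the splitting $\u(n)=L\oplus\u(n-1)$ producing the factor $2^{n-1}i^{n}$, which combined with $\mathrm{vol}(S^{2n-1})=2\pi^{n}/(n-1)!$ gives $(2\pi i)^{n}/(n-1)!$. The only differences are cosmetic: the paper works with $\U(n)\hookrightarrow\U(n+1)$ acting on the first basis vector and passes to complexified tangent vectors to organize the same bookkeeping, while you phrase the fiber integration as a global decomposition $\pi^*\alpha_n\wedge\beta_n$.
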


Before going into the proof, we review integration of differential forms over 
fibres of a bundle, thereby fixing notation.
Consider $p:X\to Y$ a fibre bundle with smooth compact fibres
of dimension $c$ and
a $C_\infty$-volume form $\omega$ on $Y$. Recall the definition
of the volume form $\int_p\omega$ on $Y$: for every
$y\in Y$ and tangent vectors $v_1,\dots,v_q\in T_yY$, the
volume form $\omega[v_1,\dots,v_q]$ on $p^{-1}(y)$ assigns to
all
 $x\in p^{-1}y$ and
$w_1,\dots,w_c\in  T_x p^{-1}(y)$ the value
\[ \omega[v_1,\dots,v_q](w_1,\dots,w_c)=\omega(w_1,\dots,w_c,\tilde{v}_1,\dots,\tilde{v}_q)\]
where $\tilde{v}_i$ is a preimage of $v_i$. The form $\omega[v_1,\dots,v_q]$
is independent of the choice of these $\tilde{v}_i$. Then
\[ \left(\int_p\omega\right)(v_1,\dots,v_q)=\int_{p^{-1}(y)}\omega[v_1,\dots,v_q]\]
\begin{proof}[Proof of Proposition \ref{volumecomp}:]
Recall 
\[ \rho_\Z=\frac{1}{\det^n}\bigwedge^n_{ij=1} dz_{ij}\]
We argue by induction on $n$. For $n=1$ we have 
\[ \int_{S^1}\frac{dz}{z}=2\pi i\]
by Cauchy's formula.

Suppose now the formula holds true for $n$. We abbreviate the
value by $\pm C(n)$. The claim reads
\[ C(n+1)=\pm \frac{(2\pi i)^{n+1}}{n!}C(n)\]

We consider the diagram
 \begin{displaymath}
    \xymatrix{
      A \ar@{|->}[r] & \op{diag}(1,A)&  (z_0, \ldots, z_n)\ar@{=}[d]\\
      {\op{U}} (n) \ar@{^{(}->}[r] & {\op{U}} (n+1) \ar[r]\ar[d]^-p & \mathbb
C^{n +1}\owns (x_0 + {i}y_0, \ldots, x_n +
      {i}y_n) \\
      &S^{2 n+1} \ar@{^{(}->}[r] &\mathbb R^{2n+2}\ar[u]^-\wr \owns (x_0,y_0,=
\ldots, x_n,y_n)\ar@{|->}[u]
   }
\end{displaymath}
with the left vertical $p$ given by application to the first vector of the
standard basis $ \vec{a}_0 =(1,0, \ldots, 0)^\top $ $\in \mathbb R^{2n+2}$. 

We integrate $\rho_\Z$ over the fibres.
The resulting form $\int_p\rho_\Z$ is $\U(n+1)$-invariant and uniquely
determined by its value in $\vec{a}_0$, which we are going to compute.
Let $\vec{v}_1,\dots,\vec{v}_{2n+1}\in  T_{\vec{a}_0}S^{2n+1}$ be tangent vectors.
Then $\rho_\Z[v_1,\dots,v_{2n+1}]$ is an $\Un$-invariant form and uniquely
determined by its value in the unit matrix $E$.

We choose as basis of the tangent space of
$S^{2n+1}$ in $\vec a_0$ the other vectors in the standard basis of $\R^{2n+2}$
and denote them 
\[ 
\vec b_0, \vec a_1, \vec b_1, \ldots, \vec a_n,
  \vec b_n .
 \]
Let $\rho_0$ be the unique $\U(n+1)$-equivariant form on $S^{2n+1}$ with
\[ \rho_0(\vec{b}_0,\vec{a}_1,\dots,\vec{b}_n)=1\]

For later use, we record that the surface of the unit ball in dimension $2n+2$ is computed
by 
\begin{equation}\tag{*}\label{vol} \int_{S^{2n+1}}\rho_0=2\frac{\pi^{n+1}}{n!}\end{equation}

We have to choose preimages in the tangent space ${ T}_E {\op{U}}(n+1)$ 
for these vectors. We can use arbitrary hermitian matrices 
$A_\nu$ for $1 \leq \nu\leq n$ und $B_\nu$ for $0 \leq \nu \leq n$ such that
\[ A_\nu \vec a_0 =\vec a_n\hspace{3ex}B_\nu\vec a_0 = \vec b_\nu.\]
A simple choice are the complex matrices
\begin{align*}
A_\nu &= E_{\nu 0} - E_{0\nu}\hspace{5ex}\nu \geq 1\\
B_\nu &={i}E_{\nu 0} + {i} E_{0\nu}\hspace{3ex}\nu \geq 1\\
B_0 &={i} E_{00}
\end{align*}
Here we are using the usual notation for the standard basis of the matrix ring over $\C$ but with indices starting from $0$. 

It is more convenient to
pass to complexified tangent spaces. ${ T}^{\mathbb C}_{\vec a_{0}} S^{2n +1}$ has the simpler basis 
\begin{align*}
&\vec{v}_0=i\vec{b_0}\\
&\vec{v}_\nu=( \vec a_{\nu} - {i}\vec b_\nu)/2\hspace{7ex}\nu=1,\dots,n\\ 
&\vec{v}_{n+\nu}=( \vec a_\nu + {i} \vec b_\nu)/2\hspace{4ex}\nu=1,\dots,n
\end{align*}
Its lift
to ${ T}^{\mathbb C}_{E} {\op{U}}(n+1)$ is given by 
\begin{align*}
&\tilde{v}_0=-E_{00}\\
&\tilde{v}_\nu=+E_{\nu 0}\hspace{7ex}\nu=1,\dots, n\\
&\tilde{v}_{n+\nu}=-E_{0\nu}\hspace{4ex}\nu=1,\dots,n
\end{align*}
By evaluating in a standard basis of $ T_E^\C\op{U}(n)$ we get
\[ \rho_\Z[\vec{v}_0,\dots,\vec{v}_{2n}]=\pm dz_{11} \wedge \ldots \wedge dz_{nn}
=\rho_\Z^{\Un}\]
By inductive hypothesis this implies
\[ \left(\int_p\rho_\Z\right)(\vec{v}_0,\dots,\vec{v}_{2n})=\pm C(n)\]
We now translate back to the original basis.
We easily find for $\nu\geq 1$
\[ \vec{v}_\nu\wedge \vec{v}_{n+\nu}=\frac{1}{4}(\vec{a}_\nu-i\vec{b}_\nu)\wedge (\vec{a}_\nu+i\vec{b}_\nu)=\frac{i}{2}\vec{a}_\nu\wedge\vec{b}_\nu\] 
and hence
\begin{gather*}
\bigwedge_{i=0}^{2n}\vec{v}_i= \pm( \vec b_0 \wedge \vec a_1\wedge \vec b_1 \wedge \ldots
    \wedge \vec a_n \wedge \vec b_\nu ) \cdot 
    \frac{i^{n+1}}{2^n}\Rightarrow\\
\left(\int_p\rho_\Z\right)(\vec{b}_0,\vec{a}_1,\vec{b}_1,\dots,\vec{b}_{n})=\pm C(n) i^{n+1}2^n\Rightarrow\\
\int_p\rho_\Z=\pm C(n)i^{n+1}2^n\rho_0
\end{gather*}
Together with equation (\ref{vol}) for the unit sphere this yields
\begin{align*}
C(n+1)&= \int_{\U(n+1)}\rho_\Z=\int_{S^{2n+1}}\left(\int_p\rho_\Z\right)\\
&= \pm C(n)i^{n+1}2^{n}\int_{S^{2n+1}}\rho_0\\
&=\pm C(n)\frac{(2\pi i)^{n+1}}{n!}   
\end{align*}
This proves the claim.
\end{proof}

\section{Proof of the main result}
\begin{proof}[Proof of Proposition \ref{mainresult}]
We want to compare the elements $\rho_\Z^\dR$ (see Lemma \ref{rho})
and $\omega^\dR$ (see Corollary \ref{comparisontop}) in $H^{n^2}_\dR(\Gl_n)$.
Let $\alpha\in\Q^*$ such that
\[ \omega_\dR=\alpha \rho_\Z^\dR\]
By Corollary \ref{comparisontop}
\[ \sigma (\omega^\sing)=(2\pi i)^{-\frac{n(n+1)}{2}}\alpha\rho_\Z^\dR\]
The comparison isomorphism between singular cohomology and holomorphic de Rham cohomology
can be reformulated as integration.
By  Corollary \ref{fund}, Lemma \ref{rho} and Proposition \ref{volumecomp}, this means
\[ (2\pi i)^{\frac{n(n+1)}{2}}\alpha^{-1}=\int_{\Un}\frac{1}{\det^n}\wedge_{i,j=1}^n dz_{ij}
=\pm \prod_{\nu=0}^{n-1}\frac{(2\pi i)^{\nu +1}}{\nu!}
\]
where $z_{ij}$ are the holomorphic coordinates on the space on $n\times n$ matrices.
Hence 
\[ \alpha=\pm \prod_{\nu=0}^{n-1}\frac{1}{\nu!}\]
as claimed.
\end{proof}

\end{document}